 \def\Spnr{Sp(d,\R)}
\newcommand{\stft}{short-time Fourier transform}
\newtheorem{theorem}{Theorem}[section]
\newtheorem{lemma}[theorem]{Lemma}
\newtheorem{definition}[theorem]{Definition}
\newtheorem{remark}[theorem]{Remark}
\newcommand{\beqa}{\begin{eqnarray*}}
\newcommand{\eeqa}{\end{eqnarray*}}
\newcommand{\field}[1]{\mathbb{#1}}
\newcommand{\bR}{\field{R}}        
\newcommand{\bZ}{\field{Z}}        
\newcommand{\bC}{\field{C}}        
\def\la{\lambda}
\def\eps{\epsilon}
\def\cF{\mathcal{F}}              
\def\cS{\mathcal{S}}
\def\rd{\bR^n}
\def\rdd{{\bR^{2n}}}
\def\lrd{L^2(\rd)}
\def\intrd{\int_{\rd}}
\def\intrdd{\int_{\rdd}}
\def\R{\right)}
\def\<{\left<}
\def\>{\right>}
\def\mv1{M_v^1}
\def\phas{(x,\xi )}
\def\mn{(m,n)}
\def\mn'{(m',n')}
\def\Spnr{Sp(n,\R)}
\def\o{\xi}
\def\R{\mathbb{R}}
\def\Ren{\mathbb{R}^n}
\def\Renn{\mathbb{R}^{2n}}
\def\sch{\mathcal{S}}
\def\H{{\mathbb H}}
\def\f{\varphi}
\def\Sn2{S_{2}(L^{2}(\Ren))}
\def\S1{S_{1}(L^{2}(\Ren))}
\def\sig00{\sigma_{0,0}}
\def\la{\langle}
\def\ra{\rangle}
\begin{document}
\begin{abstract} This note contains a new characterization of modulation spaces $M^p(\rd)$, $1\leq p\leq \infty$, by symplectic rotations. Precisely, instead to measure the time-frequency content of a function by using translations and modulations of a fixed window as building blocks, we use translations and metaplectic operators corresponding to symplectic rotations. Technically, this amounts to replace, in the computation of the $M^p(\rd)$-norm, the integral in the time-frequency plane with an integral on $\rd\times U(2n,\bR)$ with respect to a suitable measure, $U(2n,\bR)$ being the group of symplectic rotations. More conceptually, we are considering a sort of polar coordinates in the time-frequency plane.  In this new framework, the Gaussian invariance under symplectic rotations yields to choose Gaussians as suitable window functions. We also provide a similar characterization with the group $U(2n,\bR)$ being reduced to the $n$-dimensional torus $\mathbb{T}^n$. 
\end{abstract}

\title[A characterization of modulation spaces by symplectic rotations]{A characterization of modulation spaces by symplectic rotations}

\author{Elena Cordero}
\address{Dipartimento di Matematica, Universit\`a di Torino, Dipartimento di
	Matematica, via Carlo Alberto 10, 10123 Torino, Italy}
\email{elena.cordero@unito.it}
\thanks{}
\author{Maurice de Gosson}
\address{University of Vienna, Faculty of Mathematics,
	Oskar-Morgenstern-Platz 1 A-1090 Wien, Austria}
\email{maurice.de.gosson@univie.ac.at}
\thanks{}
\author{Fabio Nicola}
\address{Dipartimento di Scienze Matematiche, Politecnico di Torino, corso
	Duca degli Abruzzi 24, 10129 Torino, Italy}
\email{fabio.nicola@polito.it}

\subjclass[2010]{42B35,22C05}
\keywords{modulation spaces, metaplectic operators, symplectic group, unitary group, short-time Fourier transform}

\maketitle\section{Introduction}
The objective of this study is to find  a new characterization of modulation spaces using symplectic rotations. Precisely, we are interested in those metaplectic operators $\widehat{S}\in Mp(n,\R)$, such that the corresponding projection $S:=\pi(\widehat{S})$ onto the symplectic group $\Spnr$ is a symplectic rotation. Let us recall that the symplectic group $\Spnr$ is the subgroup of $2n\times 2n$ invertible matrices $GL(2n,\bR)$,   defined by
\begin{equation}\label{sympM}
\Spnr=\left\{S\in GL(2n,\R):\;S J S^T=J\right\},
\end{equation}
where $J$ is the orthogonal matrix
$$
J=\begin{pmatrix} 0_n&I_n\\-I_n&0_n\end{pmatrix},
$$
($I_n$, $0_n$ are  the $n\times n$ identity matrix and null matrix, respectively).
Here we consider the subgroup $$U(2n,\bR):= \Spnr\cap O(2n,\bR)\simeq U(n)$$ of symplectic rotations (cf., e.g. \cite[Section 2.3]{Gos11}), namely
\begin{equation}\label{sympS}
U(2n,\bR)=\left\{\begin{pmatrix} A&-B\\B&A\end{pmatrix} :\;AA^T+BB^T=I_n,\, AB^T=B^T A\right\}\subset \Spnr,
\end{equation}
endowed with the normalized Haar measure $dS$ (the group $U(2n,\bR)$, being compact,  is unimodular).

In the 80's H. Feichtinger \cite{F1} introduced modulation spaces to measure the time-frequency concentration of a function/distribution on the time-frequency space (or phase space) $\rdd$.  They are nowadays become popular among mathematicians and engineers because they have found numerous applications in signal processing \cite{CFP2015,F2,F3}, pseudodifferential and Fourier integral operators \cite{wiener9,wiener8,Wiener,Toft04,Toftweight},  partial differential equations \cite{benyi,benyi2,benyi3,benyi4, fio5, fio3,  EFsurvey,EFsurvey, Wangbook, baoxiang,baoxiang2} and quantum mechanics \cite{MetapWiener13,Gos11}.

To recall their definition, we need a few time-frequency tools. First, the translation $T_x$ and modulation $M_\o$ operators are defined by
$$T_x f(t)=f(t-x),\quad M_\o f(t)=e^{2\pi i t\cdot \o} f(t), \quad t,x,\o\in \rd,
$$
for any function $f$ on $\rd$.

The time-frequency representation which occurs in the definition of modulation spaces is the short-time Fourier Transform (STFT)
of a distribution $f\in\cS'(\rd)$ with respect to a function $g\in\cS(\rd)\setminus\{0\}$ (so-called window), given by
\begin{equation}\label{stftdef}
V_g f\phas=\la f,M_\o T_xg\ra=\intrd f(t)\overline{g(t-x)}\,e^{-2\pi i t\cdot \o}dt,\quad x,\o\in\rd.
\end{equation}
The  \stft\ is well-defined whenever  the bracket $\langle \cdot , \cdot \rangle$ makes sense for
dual pairs of function or distribution spaces, in particular for $f\in
\cS ' (\rd )$, $g\in \cS (\rd )$,  or for $f,g\in\lrd$.\par

\begin{definition}[\bf Modulation spaces]  \label{prva}
Given  $g\in\cS(\rd)$, and $1\leq p\leq
\infty$, the {\it
  modulation space} $M^{p}(\Ren)$ consists of all tempered
distributions $f\in \cS' (\rd) $ such that $V_gf\in L^{p}(\Renn )$. The norm on $M^{p}(\rd)$ is
\begin{equation}\label{defmod}
\|f\|_{M^{p}}=\|V_gf\|_{L^{p}}=\left(\int_{\Renn}
  |V_gf(x,\o)|^p
    dxd\o \right)^{1/p}=\left(\int_{\Renn}
    |\la f,M_\o T_xg\ra |^p
    dxd\o \right)^{1/p}
\end{equation}
(with obvious modifications for $p=\infty$).
\end{definition}
The spaces $M^{p}(\rd)$ are Banach spaces,  and
 every nonzero $g\in M^{1}(\rd)$ yields an equivalent norm in
 \eqref{defmod}, so that their definition is  independent of the choice
 of $g\in  M^{1}(\rd)$ (see \cite{F1,book}).

We now provide an equivalent norm to \eqref{defmod} by using translations $T_x$ (or modulations $M_\xi$) and 
the operators $\widehat{S}$, with $S\in U(2n,\bR)$ as follows.
\begin{theorem}\label{main}  
	Consider the Gaussian function $\f(t)=2^{d/4} e^{-\pi |t|^2}$. 
	
	(i) For $1\leq p<\infty$ and  $f\in M^p(\rd)$, we have 
	\begin{equation}\label{normeq}
	\|f\|_{M^p(\rd)}\asymp\left(\int_{\bR^n\times  U(2n,\bR)} |x|^n |\la f, \widehat{S}T_x \f\ra|^p dx dS\right)^{\frac{1}{p}},
	\end{equation}
	where $dx$ is the Lebesgue measure on $\rd$ and $dS$ the Haar measure on $U(2n,\bR)$.\par
	 Similarly,
	\begin{equation}\label{normeqxi}
	\|f\|_{M^p(\rd)}\asymp\left(\int_{\bR^n\times  U(2n,\bR)} |\xi|^n |\la f, \widehat{S}M_\xi \f\ra|^p d\xi dS\right)^{\frac{1}{p}},
	\end{equation}
	with $d\xi$ being the Lebesgue measure on $\rd$ and $dS$ the Haar measure on $U(2n,\bR)$.
	
	(ii) For $p=\infty$, $f\in M^\infty(\rd)$, it occurs
	\begin{equation}\label{normeqinf}
	\|f\|_{M^\infty (\rd)}\asymp \sup_{S\in U(2n,\bR)} \sup_{x\in \rd} |\la f, \widehat{S}T_x \f\ra|
	\end{equation}
	or, similarly,
	\begin{equation}\label{normeqinffreq}
	\|f\|_{M^\infty (\rd)}\asymp \sup_{S\in U(2n,\bR)} \sup_{\xi\in \rd} |\la f, \widehat{S} M_\xi \f\ra|.
	\end{equation}
	\end{theorem}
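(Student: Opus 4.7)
The proof naturally splits into a representation-theoretic reduction and an integral-geometric computation. First, I would exploit the fact that the Gaussian $\varphi$ is a joint eigenfunction (with unit-modulus eigenvalue) of every $\widehat{S}$ with $S\in U(2n,\mathbb{R})$; this is classical, since the metaplectic image of $U(2n,\mathbb{R})$ is generated by the flow of the isotropic harmonic oscillator, whose normalized ground state is exactly $\varphi$. Combined with the metaplectic covariance
\[
\widehat{S}\,\pi(z)=\pi(Sz)\,\widehat{S}, \qquad z\in\mathbb{R}^{2n},
\]
where $\pi(x,\xi):=M_\xi T_x$, and noting that $T_x=\pi(x,0)$ and that $(y,\eta):=S(x,0)^T$, this yields
\[
|\langle f,\widehat{S}\,T_x\varphi\rangle|=|\langle f,\pi(y,\eta)\varphi\rangle|=|V_\varphi f(y,\eta)|.
\]
Hence the right-hand side of \eqref{normeq} equals $\int_{\mathbb{R}^n\times U(2n,\mathbb{R})} |x|^n |V_\varphi f(S(x,0)^T)|^p\,dx\,dS$, and the task reduces to comparing this integral with $\|V_\varphi f\|_{L^p(\mathbb{R}^{2n})}^{p}=\|f\|_{M^p}^{p}$.

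Next I would parametrize $\mathbb{R}^{2n}$ by $\mathbb{R}^n\times U(2n,\mathbb{R})$ via $\Phi(x,S):=S(x,0)^T$. Under the standard isomorphism $U(2n,\mathbb{R})\simeq U(n,\mathbb{C})$, $\bigl(\begin{smallmatrix}A&-B\\B&A\end{smallmatrix}\bigr)\leftrightarrow A+iB$, and the identification $\mathbb{R}^{2n}\simeq\mathbb{C}^n$ given by $(y,\eta)\mapsto y+i\eta$, the map $\Phi$ becomes simply $(x,U)\mapsto Ux$. Writing $x=r\omega$ with $r\ge 0$, $\omega\in S^{n-1}\subset\mathbb{R}^n$, one has $|x|^n\,dx = r^{2n-1}\,dr\,d\omega$, which is precisely the Jacobian of polar coordinates on $\mathbb{R}^{2n}$: the weight $|x|^n$ has been engineered for exactly this cancellation. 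Since $U(n,\mathbb{C})$ acts transitively on the unit sphere $S^{2n-1}\subset\mathbb{C}^n$, the pushforward of the normalized Haar measure under $U\mapsto U\omega$ is the unique invariant probability measure on $S^{2n-1}$, independently of the base point $\omega\in S^{n-1}$. Combining this with the radial integration yields
\[
\int_{\mathbb{R}^n\times U(2n,\mathbb{R})}|x|^n|V_\varphi f(\Phi(x,S))|^p\,dx\,dS = c_n\,\|V_\varphi f\|_{L^p(\mathbb{R}^{2n})}^{p},
\]
for some positive constant $c_n$, which is precisely \eqref{normeq}.

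For the frequency version \eqref{normeqxi} I would apply \eqref{normeq} to $\widehat{J}f$, using that $J\in U(2n,\mathbb{R})$, that $\widehat{J}=\mathcal{F}$ up to a phase, that $\mathcal{F}\varphi=\varphi$ and that $M_\xi=\mathcal{F}^{-1}T_\xi\mathcal{F}$ (up to a sign). Since right multiplication by $J$ preserves $U(2n,\mathbb{R})$, the substitution $S\mapsto SJ$ converts one integral into the other. Part (ii), for $p=\infty$, is immediate: $\Phi$ is surjective onto $\mathbb{R}^{2n}$ (given $z$, take $x=(|z|,0,\dots,0)$ and any $U\in U(n,\mathbb{C})$ with $Ux=z$), so both supremum identities collapse to $\|V_\varphi f\|_{L^\infty}=\|f\|_{M^\infty}$.

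The main obstacle I anticipate is the careful bookkeeping of unimodular phases (from the Gaussian invariance and the metaplectic covariance) and of the Haar and sphere normalizations, to ensure that the implicit constants in $\asymp$ are positive, finite, and independent of $f$. Since all phases disappear upon taking absolute values and the Haar measure on $U(2n,\mathbb{R})$ together with the sphere measures produce only positive multiplicative factors, no genuine analytic difficulty arises; one only needs to verify that the metaplectic covariance and the Gaussian invariance are applied with the sign and phase conventions consistent with \eqref{stftdef}.
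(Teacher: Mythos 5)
Your proof is correct, but its central step takes a genuinely different route from the paper's. Both arguments share the initial reduction: metaplectic covariance of the STFT together with $\widehat{S}\vf=c\vf$, $|c|=1$, for $S\in U(2n,\bR)$ (the paper's Lemma \ref{Lemma00} and \cite[Proposition 252]{Gos11}; your justification via the harmonic-oscillator flow is loosely worded, since that flow generates only a one-parameter subgroup, but the fact you use is exactly the cited one) turns the right-hand side of \eqref{normeq} into $\int |x|^n|V_\vf f(S(x,0)^T)|^p\,dx\,dS$. From there the paper runs an approximate-identity argument: it averages $\tilde{\chi}_\eps$ over the group, derives the asymptotics $\int_{U(2n,\bR)}\chi_\eps(Sz)\,dS\to C|z|^{-n}$ from the measure of a tubular neighbourhood of the codimension-$n$ submanifold $\{b_{i,1}=0\}$ (Lemmas \ref{lemma1} and \ref{lemma2}), lets $\eps\to0^+$ for Schwartz $f$, and extends to $M^p$ by density, Fatou's lemma and a seminorm argument. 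You instead disintegrate Lebesgue measure on $\rdd\simeq\bC^n$ exactly: $|x|^n\,dx=r^{2n-1}\,dr\,d\omega$ reproduces the $2n$-dimensional radial Jacobian, and since $U(n)$ acts transitively on $\mathbb{S}^{2n-1}$ the pushforward of normalized Haar measure under $U\mapsto U\omega$ is the unique invariant probability measure, i.e.\ normalized surface measure, whence
\[
\int_{\rd\times U(2n,\bR)}|x|^n\,F\bigl(S(x,0)^T\bigr)\,dx\,dS=\frac{|\mathbb{S}^{n-1}|}{|\mathbb{S}^{2n-1}|}\int_{\rdd}F(z)\,dz
\]
for every nonnegative measurable $F$, by Tonelli. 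This is shorter, yields an explicit constant (so \eqref{normeq} is in fact an equality up to a computable factor), and removes both the limit $\eps\to0^+$ and the density step, since $V_\vf f$ is continuous for every $f\in\cS'(\rd)$ and the identity holds in $[0,\infty]$. The price is generality: your disintegration hinges on transitivity of $U(n)$ on spheres and does not transfer to the torus case of Theorem \ref{main2}, whereas the paper's tubular-neighbourhood computation is built precisely so that it adapts to subgroups with smaller orbits. Your handling of \eqref{normeqxi} via conjugation by $\widehat{J}$ and of the case $p=\infty$ via surjectivity of $(x,S)\mapsto S(x,0)^T$ agrees with the paper's.
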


The interpretation of the integral \eqref{normeq} above is as follows. The metaplectic operator $\widehat{S}$ produces a time-frequency rotation of the shifted Gaussian $T_x\f$. In this way, the operator 
$$f\mapsto\langle f,\widehat{S}T_x\varphi\rangle
$$ 
detects the time-frequency content of $f$ in an oblique strip, see Figure \ref{figura}. All the contributions are then added together with a weight $|x|^n$ which takes into account the underlapping of the strips as $|x|\to \infty$ and the overlapping as $|x|\to 0$.\par

Formulas \eqref{normeqxi}, \eqref{normeqinf} and \eqref{normeqinffreq} have similar meanings.
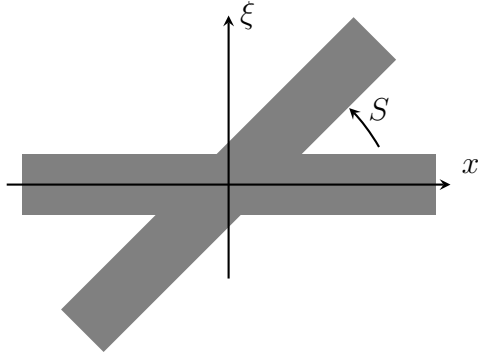
\begin{figure}[htb]\label{figura}
	\centering
	\begin{tikzpicture}[xscale=0.5,yscale=0.5]
	\fill [gray] (-5.5,-0.8) rectangle (5.5,0.8);
	\fill [gray, rotate=45] (-5.5,-0.8) rectangle (5.5,0.8);
	\draw[-stealth,thick] (-5.9,0) -- (5.9,0) node[above right] {$x$};
	\draw[-stealth,thick] (0,-2.5) -- (0,4.5) node[right] {$\xi$};
	\draw[-stealth,thick] (4,1) arc (30:45:5);
	\node at (4,2) {$S$};
	\end{tikzpicture}
	\caption{The time-frequency content of $f$ in the oblique strip is detected by the operator $f\mapsto\langle f,\widehat{S}T_x\varphi\rangle$}.
\end{figure}

Observe that in dimension $n=1$, $U(2,\bR)\simeq U(1)$ and the above formula is essentially a transition to polar coordinates with $|x|$ being the Jacobian.

Comparing \eqref{defmod} and \eqref{normeq} we observe that in \eqref{normeq} the  modulation operator $M_\xi$ is replaced by the metaplectic operator $\widehat{S}$ and the integral on the phase space $\rdd$ has become an integral on the cartesian product $\rd\times U(2n,\bR)$. The integration parameters $\phas$ of \eqref{defmod} live in $\rdd$, with dim $\rdd=2n$, whereas the parameters $(x,S)$ of \eqref{normeq} live in $\rd\times U(2n,\bR)$. Recall that dim $U(2n,\bR)=n^2$ \cite{Gos11};   this suggests  that a formula similar to \eqref{normeq}   should hold when $U(2n,\bR)$ is reduced to a suitable subgroup  $K\subset U(2n,\bR)$ of dimension $n$. This is indeed the case, as shown in the subsequent Theorem \ref{main2}. 

Consider the $n$-dimensional torus 
\begin{equation}\label{torus}
\mathbb{T}^n =\left \{S=\begin{pmatrix} e^{i\theta_1}& & \\& &\ddots& \\
& & & e^{i\theta_n}\end{pmatrix}\,:\, \theta_1,\dots,\theta_n\in \bR\right\}\subset U(n)
\end{equation}
with the Haar measure $dS=d\theta_1\dots d\theta_n$. The torus is isomorphic to a subgroup $K\subset U(2n, \bR)$, via the isomorphism $\iota$ in formula \eqref{iota} below (see the subsequent section). 

We exhibit the following characterization for $M^p$-spaces.
\begin{theorem}\label{main2} 	Let $\f$ be the Gaussian of Theorem \ref{main}.\par
(i)	For $1\leq p<\infty$, $f\in M^p(\rd)$, we have
	\begin{equation}\label{ctorus}
	\|f\|_{M^p(\rd)}\asymp \left(\int_{\rd\times \mathbb{T}^n } |x_1\dots x_n| |\la f, \widehat{S}T_x \f \ra |^p dx dS\right)^{\frac 1 p},
	\end{equation}
	or, similarly,
	\begin{equation}\label{ctorusfreq}
	\|f\|_{M^p(\rd)}\asymp \left(\int_{\rd\times \mathbb{T}^n } |\xi_1\dots \xi_n| |\la f, \widehat{S}M_\xi \f \ra |^p d\xi dS\right)^{\frac 1 p}.
	\end{equation}
	(ii) For $p=\infty$, 
	\begin{equation}\label{ctorusinf}
	\|f\|_{M^\infty(\rd)}\asymp \sup_{{S}\in \mathbb{T}^n } \sup_{x\in\rd}|\la f, \widehat{S}T_x \f \ra |
	\end{equation}
	or 
	\begin{equation}\label{ctorusinffreq}
	\|f\|_{M^\infty(\rd)}\asymp \sup_{{S}\in \mathbb{T}^n } \sup_{\xi\in\rd}|\la f, \widehat{S}M_\xi \f \ra |.
	\end{equation}
\end{theorem}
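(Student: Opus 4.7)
The plan is to reduce Theorem \ref{main2} to Theorem \ref{main}'s underlying identity $|\langle f,\widehat{S}T_x\varphi\rangle|=|V_\varphi f(Sz)|$ (with $z=(x,0)$), but now using the very explicit form of $S$ on the subgroup $K\cong\mathbb{T}^n\subset U(2n,\bR)$, so that the double integral decouples into $n$ copies of an elementary polar change of variables.

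First I would identify $\widehat{S}$ for $S\in\mathbb{T}^n$. Under $\iota$, a diagonal element $\mathrm{diag}(e^{i\theta_1},\dots,e^{i\theta_n})\in U(n)$ corresponds to the block-diagonal symplectic rotation with $A=\mathrm{diag}(\cos\theta_j)$, $B=\mathrm{diag}(\sin\theta_j)$. Consequently the metaplectic operator factors as a tensor product $\widehat{S}=F_{\theta_1}\otimes\cdots\otimes F_{\theta_n}$ of one-dimensional fractional Fourier transforms, and the one-dimensional standard Gaussian is an eigenfunction of each $F_{\theta_j}$ with eigenvalue on the unit circle, so $\widehat{S}\varphi=c(S)\varphi$ with $|c(S)|=1$. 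This is the same invariance property of the Gaussian under symplectic rotations that underlies Theorem \ref{main}; here we are only using its restriction to $K$.

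Next I would apply metaplectic covariance $\widehat{S}\pi(z)=\pi(Sz)\widehat{S}$ (up to a unimodular phase), with $z=(x,0)$. Combining with $\widehat{S}\varphi=c(S)\varphi$ gives, after taking absolute values,
\begin{equation*}
|\langle f,\widehat{S}T_x\varphi\rangle|=|V_\varphi f(Ax,Bx)|=|V_\varphi f(x_1\cos\theta_1,\dots,x_n\cos\theta_n;\,x_1\sin\theta_1,\dots,x_n\sin\theta_n)|.
\end{equation*}
The map $(x,S)\mapsto(Ax,Bx)$ is then an $n$-fold product of planar polar-coordinate maps $(x_j,\theta_j)\mapsto(x_j\cos\theta_j,x_j\sin\theta_j)$ from $\bR\times\mathbb{T}^1$ to $\bR^2$, each covering $\bR^2\setminus\{0\}$ twice and with Jacobian $|x_j|$. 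Hence, by Fubini applied to the weight $|x_1\cdots x_n|$,
\begin{equation*}
\int_{\bR^n\times\mathbb{T}^n}|V_\varphi f(Ax,Bx)|^p\,|x_1\cdots x_n|\,dx\,dS=2^n\int_{\bR^{2n}}|V_\varphi f(y,\eta)|^p\,dy\,d\eta=2^n\|f\|_{M^p}^p,
\end{equation*}
which proves \eqref{ctorus} (in fact with constant $2^{n/p}$). For \eqref{ctorusfreq} I would repeat the argument with $z=(0,\xi)$, noting that $Sz=(-B\xi,A\xi)$ is again a polar-coordinate-like parametrization of $\bR^{2n}$. The $p=\infty$ statements \eqref{ctorusinf}, \eqref{ctorusinffreq} follow by replacing integrals by suprema and using that the image of $(x,S)\mapsto(Ax,Bx)$ (resp.\ $(\xi,S)\mapsto(-B\xi,A\xi)$) is $\bR^{2n}$ up to a measure-zero set, where $V_\varphi f$ is continuous.

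Essentially everything of substance has already been done in Theorem \ref{main}: the covariance identity and the Gaussian's invariance under $\widehat{S}$. The only new ingredient is the explicit tensorial shape of $K$, and the mildly delicate bookkeeping is purely that of iterated planar polar coordinates — the factor $|x_1\cdots x_n|$ in \eqref{ctorus}--\eqref{ctorusfreq} is precisely the resulting product Jacobian. I do not foresee a genuine obstacle; the one point that needs care is verifying that the metaplectic phases disappear under $|\cdot|$, but this is automatic.
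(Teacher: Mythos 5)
Your proof is correct, but it takes a genuinely different route from the paper's. The paper proves Theorem \ref{main2} by rerunning the machinery of Theorem \ref{main}: it revisits Lemma \ref{lemma2} for the torus, computing $\Psi_\eps(z)=\eps^{-n}f(\eps/(2|z_1|))\cdots f(\eps/(2|z_n|))$ with $f(\mu)=\mathrm{meas}\{\theta\in[0,2\pi]:|\sin\theta|<\mu\}$, showing $\Psi_\eps(z)\to C|z_1\cdots z_n|^{-1}$, and then repeating the $\eps\to 0^+$ limit for Schwartz functions followed by the density/Fatou step. You instead observe that for the torus the map $(x,S)\mapsto S(x,0)=(Ax,Bx)$ is an $n$-fold product of planar polar-coordinate maps, hence a $2^n$-to-one local diffeomorphism of $\bR^n\times\mathbb{T}^n$ onto $\bR^{2n}$ off a null set with Jacobian $|x_1\cdots x_n|$, so a single application of Tonelli and the change-of-variables formula turns the $p$-th power of the right-hand side of \eqref{ctorus} into $2^{n}\|V_\f f\|^p_{L^p}$ exactly. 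This is cleaner and buys more: an exact constant rather than a mere equivalence, validity for all $f\in\cS'(\rd)$ at once (so no density/Fatou step, and finiteness of the right-hand side characterizes membership in $M^p$), and a transparent explanation of the weight as a Jacobian. The reason this shortcut is unavailable for Theorem \ref{main} --- and the reason the paper developed the mollifier argument in the first place --- is that $\dim(\bR^n\times U(2n,\bR))=n+n^2>2n$ for $n\geq 2$, so there the fibers of $(x,S)\mapsto S(x,0)$ must be integrated out, which is exactly what $\Psi_\eps$ accomplishes; the paper then simply reuses that template on the torus, where your direct substitution suffices. Two small points you should make explicit: the pointwise identity $|\la f,\widehat{S}T_x\f\ra|=|V_\f f(Ax,Bx)|$ uses Lemma \ref{Lemma00} in its extension to $f\in\cS'(\rd)$, $g\in\cS(\rd)$ (immediate from the covariance $\widehat{S}M_\xi T_x\widehat{S}^{-1}=c\,M_{\xi'}T_{x'}$ with $(x',\xi')=S(x,\xi)$ read as an operator identity on $\cS'(\rd)$; alternatively fall back on the paper's density step for $p>2$); and for $p=\infty$ the image of $(x,S)\mapsto(Ax,Bx)$ is in fact all of $\bR^{2n}$, so no continuity or measure-zero caveat is needed there.
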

The above results for the groups $U(2n,\bR)$ and $\mathbb{T}^n$ can be interpreted, in a sense, as two extreme cases, and it would be interesting to find, more generally, for which compact subgroups $K\subset U(2n,\bR)$ similar characterizations hold. We conjecture that they should be precisely the subgroups $K\subset U(2n,\bR)$ such that every orbit for their action on $\mathbb{R}^{2n}$ intersects $\{0\}\times\mathbb{R}^n$ (up to subsets of measure zero), with a corresponding weighted measure on $\mathbb{R}^{n}\times K$ to be determined. \par

Another open problem which is worth investigating is the study of discrete versions of the above characterizations.\par\medskip
The paper is organized as follows: in Section \ref{sec2} we collected some preliminary results, whereas Section \ref{sec3} is devoted to the proof of Theorems \ref{main} and \ref{main2}. In Section 4 we rephrase more explicitly Theorem \ref{main2} in terms of the partial fractional Fourier transform.

\section{Notation and Preliminaries}\label{sec2}
\textbf{Notation.} We write $x\cdot y$ for  the scalar product on
$\Ren$ and $|t|^2=t\cdot t$, for $t,x,y \in\Ren$. For expressions $A,B\geq 0$, we
use the notation
$A\lesssim B$ to represent the inequality
$A\leq c B$ for a suitable
constant $c>0$, and  $A
\asymp B$  for the equivalence  $c^{-1}B\leq
A\leq c B$.

The Schwartz class is denoted by
$\sch(\Ren)$, the space of tempered
distributions by  $\sch'(\Ren)$.   We
use the brackets  $\la f,g\ra$ to
denote the extension to $\sch '
(\Ren)\times\sch (\Ren)$ of the inner
product $\la f,g\ra=\int f(t){\overline
	{g(t)}}dt$ on $L^2(\Ren)$.\par\bigskip
{\bf Metaplectic Operators.} The metaplectic representation $\mu$ of $Mp(n,\bR)$, the two-sheeted cover of the symplectic group $\Spnr$, defined in  \eqref{sympM} arises as intertwining operator between the
standard Schr\"odinger representation $\rho$ of the Heisenberg
group $\H^d$ and the representation that is obtained from it by
composing $\rho$ with the action of $\Spnr$ by automorphisms on
$\H^d$ (see, e.g., \cite{Gos11, folland89, leray}). 
Let us recall  the main points of a direct construction. \par 
The symplectic group $\Spnr$ is generated by the so-called free symplectic matrices \[S=\left(\begin{matrix}A & B \\ C & D\end{matrix}\right)\in \Spnr,\quad \det B\neq 0.\] To each such a matrix the associated generating function is defined by \[W(x,x')=\frac{1}{2}DB^{-1}x\cdot x-B^{-1}x\cdot x'+\frac{1}{2}B^{-1}Ax'\cdot x'.\] Conversely, to every polynomial of the type \[W(x,x')=\frac{1}{2}Px\cdot x-Lx\cdot x'+\frac{1}{2}Qx'\cdot x'\] with \[P=P^T, Q=Q^T\] and \[\det L\neq 0\] it can be associated a free symplectic matrix, namely \[S_W=\left(\begin{matrix}L^{-1}Q & L^{-1} \\ PL^{-1}Q-L^T & PL^{-1}\end{matrix}\right).\] 
Given $S_W$ as above and $m\in\mathbb{Z}$ such that 
\[
m\pi \equiv {\rm arg}\,\det L\quad {\rm mod}\, 2\pi,
\]
the related operator $\widehat{S}_{W,m}$ is defined by setting, for $\psi \in \mathcal{S}(\mathbb{R}^n)$,
\begin{equation}\label{opfree}
\widehat{S}_{W,m}\psi(x)=\frac{1}{i^{n/2}}\Delta(W)\int_{\mathbb{R}^n}e^{2\pi iW(x,x')}\psi(x')dx'
\end{equation}
(with $ i^{n/2}=e^{i\pi n/4}$) where
\[\Delta(W)=i^m\sqrt{|\det L|}.\]
The operator $\widehat{S}_{W,m}$ is named {\it quadratic Fourier transform} associated to the free symplectic matrix $S_W$.
The class modulo 4 of the integer $m$ is called {\it Maslov index} of $\widehat{S}_{W,m}$. Observe that if $m$ is one choice of Maslov index, then $m+2$ is another equally good choice: hence to each function $W$ we associate two operators, namely $\widehat{S}_{W,m}$ and $\widehat{S}_{W,m+2}=-\widehat{S}_{W,m}$.

The quadratic Fourier transform corresponding to the choices $S_W=J$ and $m=0$ is denoted by $\widehat{J}$. The generating function of $J$ is simply $W(x,x')=-x\cdot x'$. It follows that
\begin{equation}\label{Jmeta}
\widehat{J}\psi(x)=\frac{1}{ i^{n/2}}\int_{\mathbb{R}^n}e^{-2\pi ix\cdot x'}\psi(x')dx'=\frac{1}{i^{n/2}}\cF\psi(x)
\end{equation}for $\psi\in \mathcal{S}(\mathbb{R}^n)$, where $\cF$ is the usual unitary Fourier transform.\par
The quadratic Fourier transforms $\widehat{S}_{W,m}$ form a subset of the group $\mathcal{U}(L^2(\mathbb{R}^n))$ of unitary operators acting on $L^2(\mathbb{R}^n)$, which is closed under the operation of inversion and they generate a subgroup of $\mathcal{U}(L^2(\mathbb{R}^n))$ which is, by definition,
the metaplectic group $Mp(n,\mathbb{R})$. The elements of $Mp(n,\mathbb{R})$ are called metaplectic operators.
\par
Hence, every $\widehat{S}\in Mp(n,\mathbb{R})$ is, by definition, a product \[\widehat{S}_{W_1,m_1}\ldots\widehat{S}_{W_k,m_k}\] of metaplectic operators associated to free symplectic matrices.\par
Indeed, it can be proved that every $\widehat{S}\in Mp(n,\mathbb{R})$ can be written as a product of exactly two quadratic Fourier transforms: $\widehat{S}=\widehat{S}_{W,m}\widehat{S}_{W',m'}$. 
Now, it can be shown that the mapping 
\[
\widehat{S}_{W,m}\longmapsto S_W
\]
extends to a group homomorphism 
\[
\pi: Mp(n,\mathbb{R})\to Sp(n,\mathbb{R}),
\]
which is in fact a double covering. \par
We also observe that each metaplectic operator is, by construction, a unitary operator in $L^2(\rd)$, but also an automorphism of $\mathcal{S}(\rd)$ and of $\mathcal{S}'(\rd)$.

We are interested in its restriction $\widehat{S}=\pi(S)$, with $S\in U(2n,\bR)$, the symplectic rotations in \eqref{sympS}.

Observe that $U(n):=U(n,\bC)$, the complex unitary group (the group of $n\times n$ invertible complex matrices $V$ satisfying $VV^\ast=V^\ast V=I_n$) is isomorphic to $U(2n,\bR)$. The isomorphism $\iota$ is the mapping $ \iota: U(n)\to U(2n, \bR)$ given by
\begin{equation}\label{iota}
\iota(A+iB)=\begin{pmatrix} A&-B\\B&A\end{pmatrix},
\end{equation}
for details see \cite[Chapter 2.3]{Gos11}.

We present here some results related to the group $U(2n,\bR)$, which will be used in the sequel to attain the characterization of Theorem \ref{main}.
First, we recall a well-known result, see for instance \cite[Lemma 9.4.3]{book}:
\begin{lemma}\label{Lemma00}
	For $f,g\in\lrd$ and $S \in \Spnr$, the STFT $V_g f$ satisfies
	\begin{equation}\label{C1}
|V_{\widehat{S}g} ({\widehat{S}f})\phas|=|V_g f (S^{-1}\phas)|,\quad \phas\in\rdd.
	\end{equation}
\end{lemma}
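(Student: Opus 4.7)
My plan is to reduce \eqref{C1} to the classical symplectic covariance of the metaplectic representation. Setting $\pi(z):=M_\xi T_x$ for $z=\phas$, the key intertwining relation reads
\begin{equation*}
\widehat{S}\, \pi(z)\, \widehat{S}\inv = c(z,S)\, \pi(Sz),\qquad z\in\rdd,
\end{equation*}
where $c(z,S)\in\bC$ is a unimodular scalar depending on the chosen metaplectic lift $\widehat S$ of $S$. Since absolute values appear on both sides of \eqref{C1}, this phase ambiguity will drop out at the end and there is no need to keep track of $c(z,S)$ explicitly.

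First I would rewrite the STFT as $V_g f(z)=\la f,\pi(z)g\ra$, which turns the left-hand side of \eqref{C1} into
\begin{equation*}
V_{\widehat{S}g}(\widehat{S}f)(z)=\la \widehat{S}f,\pi(z)\widehat{S}g\ra.
\end{equation*}
Inserting $\widehat{S}\inv\widehat{S}=\mathrm{Id}$ between $\pi(z)$ and $\widehat{S}g$ and exploiting the unitarity of $\widehat{S}$ on $\lrd$, this equals $\la f,\widehat{S}\inv\pi(z)\widehat{S}g\ra$. Applying the covariance identity above (with $S$ replaced by $S\inv$, or equivalently by multiplying on both sides by $\widehat S\inv$) gives
\begin{equation*}
V_{\widehat{S}g}(\widehat{S}f)(z)=\overline{c(z,S)}\,\la f,\pi(S\inv z)g\ra=\overline{c(z,S)}\,V_g f(S\inv z),
\end{equation*}
and passing to absolute values yields \eqref{C1}.

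The only nontrivial ingredient is the covariance identity for $\pi(z)$, which is the main (though standard) technical point. If we cite it from \cite{Gos11,folland89}, the proof collapses to a single line. Otherwise, the natural strategy is to verify it on a set of generators of $\Spnr$, for instance the block-diagonal matrices $\mathrm{diag}(A,(A^T)\inv)$, the symmetric shears $\begin{pmatrix} I_n&0\\C&I_n\end{pmatrix}$ with $C=C^T$, and the matrix $J$ whose metaplectic lift is given in \eqref{Jmeta}; in each case the intertwining relation reduces to a direct change of variable in the integral kernel of the corresponding metaplectic operator, and the general case follows by composition. Once this is in place, the lemma follows immediately from the steps above.
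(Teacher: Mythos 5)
Your proposal is correct and follows essentially the same route as the paper, which does not prove this lemma itself but cites it as \cite[Lemma 9.4.3]{book}: the standard proof there is precisely the symplectic covariance relation $\widehat{S}\,M_\xi T_x\,\widehat{S}^{-1}=c\,M_{\xi'}T_{x'}$ with $(x',\xi')=S(x,\xi)$ and $|c|=1$, combined with unitarity of $\widehat{S}$ on $L^2(\mathbb{R}^n)$, exactly as you argue. Your observation that the unimodular phase is irrelevant after taking absolute values, and your fallback of checking the intertwining identity on generators of $Sp(n,\mathbb{R})$, are both sound.
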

This second issue is contained in \cite{CNT17}, we sketch the proof for the sake of consistency.
\begin{lemma}\label{Lemma0}
For $\f,\psi\in \cS(\rd)$ and $S\in U(2n,\bR)$, the STFT $V_\f  (\widehat{S}\psi)$ is a Schwartz function, with seminorms uniformly bounded when $S\in U(2n,\bR)$.
\end{lemma}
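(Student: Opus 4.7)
The plan is to split the claim into two parts: (a) for each fixed $S\in U(2n,\bR)$, show that $V_\f(\widehat{S}\psi)\in\cS(\rdd)$; (b) upgrade this to a uniform bound on Schwartz seminorms as $S$ varies in $U(2n,\bR)$. Part (a) is immediate from two standard facts: every metaplectic operator $\widehat{S}$ is a continuous automorphism of $\cS(\rd)$, as is visible from the factorization $\widehat{S}=\widehat{S}_{W,m}\widehat{S}_{W',m'}$ and the oscillatory-integral form \eqref{opfree} (each factor has a smooth quadratic phase, and integration by parts yields the required decay); and the STFT is a continuous bilinear map $\cS(\rd)\times\cS(\rd)\to\cS(\rdd)$. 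Hence $\widehat{S}\psi\in\cS(\rd)$ and therefore $V_\f(\widehat{S}\psi)\in\cS(\rdd)$.

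For (b), my main lever will be Lemma \ref{Lemma00} applied with $g=\widehat{S}^{-1}\f$, which gives
\[
|V_\f(\widehat{S}\psi)(z)|=|V_{\widehat{S}^{-1}\f}\psi(S^{-1}z)|,\qquad z\in\rdd.
\]
Because $S^{-1}$ is orthogonal, it preserves the weights $(1+|z|^2)^N$ defining the Schwartz topology; together with the continuity of the bilinear STFT map, this controls the weighted $L^\infty$-seminorms of $V_\f(\widehat{S}\psi)$ in terms of Schwartz seminorms of $\widehat{S}^{-1}\f$ and $\psi$. Seminorms involving derivatives in $(x,\xi)$ reduce to the same form either by differentiating under the integral in \eqref{stftdef} (producing STFTs with windows $\partial^\alpha\f$ and polynomial prefactors in $\xi$), or by invoking the covariance identity $\widehat{S}\,M_\xi T_x\widehat{S}^{-1}=c(S,x,\xi)M_{\xi'}T_{x'}$ with $(x',\xi')=S(x,\xi)$ and then differentiating in $(x,\xi)$.

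The step I expect to be the main obstacle is bounding the Schwartz seminorms of $\widehat{S}^{-1}\f$ uniformly for $S\in U(2n,\bR)$. The natural statement is that $S\mapsto \widehat{S}^{-1}\f$ is continuous from $U(2n,\bR)$ into the Fr\'echet space $\cS(\rd)$: once this is granted, the continuous image of the compact group $U(2n,\bR)$ is compact, hence bounded, in $\cS(\rd)$, and the uniform control follows. This continuity does not come for free from $L^2$-unitarity, but can be verified by observing that the parameters $P,L,Q$ of the two quadratic Fourier transforms composing $\widehat{S}^{-1}$ depend continuously on $S$, so that the standard oscillatory-integral and integration-by-parts estimates applied to \eqref{opfree} are uniform on compact parameter sets. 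This is essentially the strategy of \cite{CNT17}.
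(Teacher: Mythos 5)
Your argument is correct in substance and hinges on the same key mechanism as the paper: compactness of $U(2n,\bR)$ plus locally uniform oscillatory-integral bounds for the quadratic Fourier transforms, yielding boundedness of a metaplectic orbit in $\cS(\rd)$. The packaging differs in two ways worth noting. First, the paper avoids your entire part (b) bookkeeping: since $V_\f:\cS(\rd)\to\cS(\rdd)$ is a continuous linear map between Fr\'echet spaces, it sends bounded sets to bounded sets, so the whole lemma reduces at once to showing that $\{\widehat{S}\psi: S\in U(2n,\bR)\}$ is bounded in $\cS(\rd)$ --- no covariance formula, no separate treatment of derivatives in $(x,\xi)$, no transfer to the window $\widehat{S}^{-1}\f$. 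Your detour through $|V_\f(\widehat{S}\psi)(z)|=|V_{\widehat{S}^{-1}\f}\psi(S^{-1}z)|$ works, but it lands you at the same crux (uniform Schwartz bounds for a metaplectic orbit, now of $\f$ rather than $\psi$) after strictly more effort. Second, the one step you should make precise is the claim that ``the parameters $P,L,Q$ of the two quadratic Fourier transforms composing $\widehat{S}^{-1}$ depend continuously on $S$'': the factorization into two quadratic Fourier transforms is not canonical, the free symplectic matrices form only an open subset of $Sp(n,\bR)$, and a globally continuous choice of the factors need not exist. Since boundedness of the seminorms on a compact group is a local matter, this is harmless, but the argument must be phrased locally; the paper does so by writing $S=S_1J^{-1}S_0$ with $S_1$ in a small neighbourhood of $J$ (hence free, with $\|P\|,\|Q\|<\epsilon$ and $\|L-I\|<\epsilon$), so that $\widehat{S}\psi=\pm\widehat{S}_1[\widehat{J}^{-1}\widehat{S}_0\psi]$ is a single explicit quadratic Fourier transform, with uniformly controlled phase, applied to one fixed Schwartz function. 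With that local formulation substituted for your global continuity claim, your proof goes through.
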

\begin{proof}
Since $\f \in \cS(\rd)$, the STFT $V_\f$ is a continuous mapping from $ \cS(\rd)$ into $\cS(\rdd)$ (see \cite{F1}). Hence, it is enough to show that
\[\{\hat{S}\varphi:\,S\in U(2n,\mathbb{R})\}\] is a bounded subset of the Schwartz class $\mathcal{S}(\mathbb{R}^n)$, i.e.,  every Schwartz seminorm is bounded on it. Since the group  $U(2n,\mathbb{R})$ is compact, it is sufficient to show that every seminorm is locally bounded, that is, we can limit ourselves to consider $S$ in a sufficiently small neighbourhood for any fixed $S_0\in U(2n,\mathbb{R})$. Equivalently, we can consider $S$ of the form $S=S_1J^{-1}S_0$ where $S_1$ belongs to a enough small neighbourhood of $J$ in $U(2n,\R)$. Using the representation of metaplectic operators recalled at the beginning of this section, we can write
\begin{align*}
\hat{S}\varphi(x)&=\pm \widehat{S}_1[\widehat{J}^{-1}\widehat{S}_0\varphi](x)\\
&=c \sqrt{|\det L|}\int_{\mathbb{R}^n}e^{2\pi i(\frac{1}{2}Px\cdot x-Lx\cdot y+\frac{1}{2}Qy\cdot y)}[\underbrace{\widehat{J}^{-1}\widehat{S}_0\varphi]}_{\in \mathcal{S}(\mathbb{R}^n)}(y)dy
\end{align*}
where $|c|=1$ and, we might say, $\|P\|<\epsilon$, $\|Q\|<\epsilon$, $\|L-I\|<\epsilon$. If $\epsilon<1$, it is straightforward to check that  $\hat{S}\varphi$ belongs to a bounded subset of $\mathcal{S}(\mathbb{R}^n)$, as desired.
\end{proof}
\begin{lemma}\label{lemma1}
Let $B=(b_{i,j})_{i,j=1,\dots n}$ be the $n\times n$ submatrix in \eqref{sympS}.	The subset $\Sigma\subset U(2n,\bR)$ obtained by setting $b_{i,1}=0,$ $i=1,\dots n$ (i.e., the first column of $B$ is set to zero), is a submanifold of codimension $n$.
\end{lemma}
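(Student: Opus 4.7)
The strategy is to transfer the problem through the isomorphism $\iota:U(n)\to U(2n,\bR)$ of \eqref{iota}, where the condition admits a transparent geometric interpretation. For $V=A+iB\in U(n)$, the first column of the real matrix $B$ is precisely the imaginary part of the first column $v_1:=Ve_1\in\bC^n$, and unitarity forces $v_1\in S^{2n-1}\subset\bC^n\simeq\bR^{2n}$. Thus $\iota^{-1}(\Sigma)=\{V\in U(n):\mathrm{Im}\,v_1=0\}$, i.e.\ the preimage, under the evaluation map $\mathrm{ev}_1:U(n)\to S^{2n-1}$, $V\mapsto Ve_1$, of the totally real subsphere $S^{n-1}=S^{2n-1}\cap\bR^n$.

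Next I would verify that $\mathrm{ev}_1$ is a submersion onto $S^{2n-1}$. Using the left-invariant trivialization $T_{V_0}U(n)\cong V_0\cdot\mathfrak{u}(n)$, the differential of $\mathrm{ev}_1$ at $V_0$ sends $V_0\xi$ to $V_0\xi e_1$, and since $V_0$ is invertible it suffices to establish
\begin{equation*}
\{\xi e_1 : \xi\in\mathfrak{u}(n)\}=\{u\in\bC^n : \mathrm{Re}\,u_1=0\},
\end{equation*}
the right-hand side being precisely $T_{e_1}S^{2n-1}$. The inclusion $\subseteq$ is immediate since skew-Hermitian matrices have purely imaginary diagonal entries. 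For the reverse inclusion, given any $u\in\bC^n$ with $u_1\in i\bR$, define $\xi$ by setting its first column equal to $u$, its first row to $-\overline{u}^{\,T}$ (consistent at the $(1,1)$-entry because $u_1$ is imaginary), and all remaining entries equal to $0$; then $\xi\in\mathfrak{u}(n)$ and $\xi e_1=u$.

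Finally, since $S^{n-1}$ is a smooth submanifold of $S^{2n-1}$ of codimension $(2n-1)-(n-1)=n$, the submersion preimage theorem yields $\iota^{-1}(\Sigma)=\mathrm{ev}_1^{-1}(S^{n-1})$ as a smooth submanifold of $U(n)$ of codimension $n$, and transporting back along the diffeomorphism $\iota$ delivers the claim for $\Sigma\subset U(2n,\bR)$. As a sanity check on dimensions, $\dim\Sigma=n(n-1)=n^2-n$, matching $\dim U(2n,\bR)-n$. No step should present genuine difficulty: the only actual computation is the surjectivity of $d\,\mathrm{ev}_1$ verified above, and the remainder is a routine application of standard submersion/transversality machinery.
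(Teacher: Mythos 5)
Your proof is correct, and it reaches the conclusion by a somewhat different (though closely related) route than the paper. The paper works directly with the coordinate projection $(b_{1,1},\dots,b_{n,1}):U(2n,\bR)\to\rd$ and shows it has rank $n$ at each point of $\Sigma$ by exhibiting a local right inverse: starting from $S_0=A+iB=(V_1,\dots,V_n)$ with $\mathrm{Im}\,V_1=0$, it deforms the first column so that its imaginary part equals a prescribed small vector of $\rd$, restores unitarity of the frame by Gram--Schmidt in $\bC^n$, and observes that the composition with the projection is the identity near $0$. You instead package the whole first column as the evaluation map $\mathrm{ev}_1:U(n)\to S^{2n-1}$, prove by a Lie-algebra computation in $\mathfrak{u}(n)$ that it is a submersion, and realize $\iota^{-1}(\Sigma)$ as the preimage of the totally real subsphere $S^{n-1}$. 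Both arguments verify the same essential fact --- that the $n$ defining equations $\mathrm{Im}(Ve_1)=0$ are independent along $\Sigma$ --- but your tangent-space computation is global and arguably cleaner (it shows $\mathrm{ev}_1$ is a submersion everywhere, not just where needed), while the paper's construction is more hands-on and avoids invoking the preimage theorem for submanifolds. One small point to tighten: in passing from the surjectivity of $\xi\mapsto\xi e_1$ onto $T_{e_1}S^{2n-1}$ to the surjectivity of $d\,\mathrm{ev}_1$ at a general $V_0$, invertibility of $V_0$ is not quite the right justification; what is needed is that $V_0$ carries $T_{e_1}S^{2n-1}$ onto $T_{V_0e_1}S^{2n-1}$, which holds because $V_0$ is unitary (hence an isometry of $\bR^{2n}$ sending $e_1^{\perp}$ to $(V_0e_1)^{\perp}$), not merely invertible.
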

\begin{proof}
We have to verify that the coordinates $b_{1,1},\dots, b_{n,1}$ are independent on the subset $\Sigma$, namely the projection
$$(b_{1,1},\dots,b_{n,1}): U(2n,\bR)\to \rd
$$
has rank $n$ on $\Sigma$.

Let us first show that for every $S_0\in \Sigma$ there exists a $U(2n,\bR)$-valued smooth function $S(b_{1},\dots,b_{n})$, defined in a neighbourhood of $0\in \rd$, such that $S(0)=S_0$ and the first column ``of its submatrix $B$" is precisely $(b_{1},\dots,b_{n})^T$. 
	
	Let $S_0= A+i B=(V_1,\dots,V_n)\in \Sigma$, with $V_j$ being a $n\times 1$ complex vector, $j=1,\dots, n$, so that by assumption $(b_{i,1})_{i=1,\dots, n}=\textrm{Im}\, V_1=0$. We consider any smooth function $V_1(b_{1},\dots,b_{n})$, defined in a neighbourhood of $0\in \rd$, valued in the unit sphere of $\bC^n$, such that
	$$\textrm{Im}\, V_1(b_{1},\dots,b_{n})=(b_{1},\dots,b_{n})^T,\quad V_1(0)=V_1.
	$$
	Then, we apply the Gram-Schmidt orthonormalization procedure in $\bC^n$ to the set of vectors $(V_1(b_{1},\dots,b_{n}),V_2,\dots,V_n)$. This provides the desired $U(n)$-valued function
	$S(b_{1},\dots,b_{n})$. In particular $S(0)=S_0$.\par
	Now, the composition of the mapping
	$$(b_{1},\dots,b_{n})\mapsto S(b_{1},\dots,b_{n})
	$$
followed by the projection $(b_{1,1},\dots,b_{n,1}): U(2n,\bR)\to \rd$ is therefore the identity mapping in a neighbourhood of $0$ and has rank $n$. Hence the same is true for the projection $(b_{1,1},\dots,b_{n,1}): U(2n,\bR)\to \rd$ at $S_0$.
\end{proof}
\begin{lemma}\label{lemma2}
For every $\eps>0$, define \begin{equation}\label{ki}
\chi_\eps\phas=\frac{1}{\eps^n} \mathbbm{1}_{Q}\left(\frac{\xi}{\eps}\right),
\end{equation}
where 
$$Q=\left[-\frac 12,\frac12\right]^n\subset \rd \quad \mbox{and}\,\quad \mathbbm{1}_{Q}=\begin{cases}
1, \,\,\,\xi \in Q
\\
0, \,  \,\, \xi \notin Q\\
\end{cases} 
$$
and 
\begin{equation}\label{kitilde}\displaystyle
\tilde{\chi}_\eps(z)=\frac{\chi_\eps(z)}{\int_{U(2n,\bR)}\chi_\eps(Sz) \,dS}, \quad z\in\rdd.
\end{equation}
Then we have
\begin{equation}\label{PU}
\int_{U(2n,\bR)}\tilde{\chi}_\eps(Sz) \,dS=1,\quad \forall z\in \rdd
\end{equation}
and 
\begin{equation}\label{CD}
\lim_{\eps\to 0^+}\int_{\rdd}\tilde{\chi}_\eps\phas\Phi\phas dx d\xi=C\int_{\rd} |x|^n \Phi(x,0)\, dx,
\end{equation}
for some $C>0$ and for every continuous function $\Phi$ on $\rdd$ with a rapid decay at infinity.
\end{lemma}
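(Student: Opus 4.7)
The plan is to treat (i) as a formal consequence of the invariance of Haar measure, and (ii) as a computation on a sphere. For (i), set $h_\eps(z):=\int_{U(2n,\bR)}\chi_\eps(Sz)\,dS$, so that $\tilde\chi_\eps(z)=\chi_\eps(z)/h_\eps(z)$. The left-invariance of Haar measure on the compact group $U(2n,\bR)$ immediately gives $h_\eps(Tz)=h_\eps(z)$ for every $T\in U(2n,\bR)$, hence $\tilde\chi_\eps(Tz)=\chi_\eps(Tz)/h_\eps(z)$, and integrating over $T$ yields \eqref{PU}.

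For (ii), the key observation is that, through the isomorphism \eqref{iota} with $U(n)$ and the identification $\rdd\simeq \bC^n$ via $(x,\xi)\leftrightarrow x+i\xi$, the action of $U(2n,\bR)$ becomes complex matrix multiplication, which is transitive on each sphere $S^{2n-1}(r)\subset \rdd$ and preserves the normalized surface measure $\sigma_r$. Consequently, the pushforward of $dS$ under the orbit map $S\mapsto Sz$ equals $\sigma_{|z|}$, so $h_\eps(z)=H_\eps(|z|)$ depends only on $|z|$, with
\[
H_\eps(r)=\frac{1}{\eps^n}\,\sigma_r\bigl(\{w=(w',w'')\in S^{2n-1}(r):w''\in \eps Q\}\bigr).
\]
The set above is a tubular $\eps$-neighborhood of the equatorial $(n-1)$-sphere $S^{n-1}(r)\times \{0\}\subset S^{2n-1}(r)$ (of codimension $n$, paralleling the structure of Lemma \ref{lemma1}). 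A local parametrization near the equator shows that its area equals $\eps^n|S^{n-1}|r^{n-1}+o(\eps^n)$, which gives
\[
H_\eps(r)\xrightarrow[\eps\to 0^+]{}\frac{|S^{n-1}|}{|S^{2n-1}|}\,r^{-n}=:C^{-1}r^{-n}.
\]

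To conclude \eqref{CD}, I substitute $\xi=\eps\eta$ on the left-hand side and use $h_\eps(x,\eps\eta)=H_\eps(\sqrt{|x|^2+\eps^2|\eta|^2})$ to obtain
\[
\int_{\rdd}\tilde\chi_\eps\,\Phi\,dx\,d\xi=\int_{\rd}\!\!\int_Q \frac{\Phi(x,\eps\eta)}{H_\eps(\sqrt{|x|^2+\eps^2|\eta|^2})}\,d\eta\,dx,
\]
whose integrand converges pointwise to $C|x|^n\Phi(x,0)$ for $x\neq 0$. The same tubular reasoning, valid for all $t>0$, yields the uniform lower bound $H_\eps(r)\gtrsim \min(\eps^{-n},r^{-n})$. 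Combined with the rapid decay of $\Phi$, this produces an $L^1(\rd\times Q)$ dominating function (concretely, $\lesssim(1+|x|^n)\sup_{|\xi|\le 1}|\Phi(x,\xi)|$), and dominated convergence yields \eqref{CD}. The main technical obstacle is the local geometry on the sphere: one needs a parametrization of $S^{2n-1}(r)$ near the equator sharp enough to extract both the leading-order asymptotics and the uniform lower bound from the same estimate.
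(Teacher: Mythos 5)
Your proof is correct, and for the heart of the lemma --- the asymptotics and the lower bound for $h_\eps(z)=\int_{U(2n,\bR)}\chi_\eps(Sz)\,dS$ (called $\Psi_\eps$ in the paper) --- you take a genuinely different route. The paper works inside the group: it reduces to $z=(x_1,0,\dots,0)$, rewrites $h_\eps(z)$ as $\eps^{-n}$ times the Haar measure of the set of matrices whose first column of $B$ has entries of modulus less than $\eps/(2|z|)$, and uses Lemma \ref{lemma1} to view this set as a tubular neighbourhood of a codimension-$n$ submanifold of $U(2n,\bR)$, whence $\mu^{-n}f(\mu)\to C_0$ and $f(\mu)\gtrsim\min\{1,\mu^n\}$. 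You instead push the Haar measure forward along the orbit map: since $U(n)$ acts transitively on spheres in $\bC^n\simeq\rdd$ and the normalized surface measure is the unique invariant probability measure there, $h_\eps(z)=H_\eps(|z|)$ with $H_\eps(r)=\eps^{-n}\sigma_r\bigl(\{w=(w',w'')\in S^{2n-1}(r):w''\in\eps Q\}\bigr)$, and the tubular-neighbourhood computation is carried out around the equatorial $S^{n-1}$ inside $S^{2n-1}(r)$. This bypasses Lemma \ref{lemma1} entirely, produces the explicit constant $C=|S^{2n-1}|/|S^{n-1}|$ (left implicit in the paper), and the scaling identity $H_\eps(r)=\eps^{-n}g(\eps/r)$ gives for free the locally uniform convergence needed when you evaluate $H_\eps$ at $\sqrt{|x|^2+\eps^2|\eta|^2}$. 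The trade-off is that your argument relies on the orbits being full spheres, so it does not adapt to the torus case of Theorem \ref{main2} or to general subgroups $K\subset U(2n,\bR)$, for which the paper's in-the-group tubular argument is the natural one. Two points you should make explicit: the substitution $S\mapsto ST$ in \eqref{PU} uses \emph{right} invariance of $dS$ (immaterial here, since a compact group is unimodular, as you in effect note); and the well-definedness of $\tilde\chi_\eps$ requires $h_\eps(z)>0$ for every $z$, which in your scheme follows from $h_\eps(0)=\eps^{-n}$ together with $H_\eps(r)\gtrsim\min(\eps^{-n},r^{-n})$ for $r>0$, so that bound should logically precede part (i), exactly as \eqref{P1} does in the paper. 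Your final limiting step (the substitution $\xi=\eps\eta$ and domination by $(1+|x|)^n\sup_{|\xi|\le 1}|\Phi(x,\xi)|$) is the paper's Fubini plus dominated-convergence argument in an equivalent form.
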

\begin{proof}
We will show in a moment that, for $z=\phas\in\rdd$,
\begin{equation}\label{P1}
\int_{{U(2n,\bR)}}\chi_\eps(Sz)\,dS \gtrsim \min \{\eps^{-n}, {|z|^{-n}}\}
\end{equation}
(with the convention, at $z=0$, that $\min \{\eps^{-n}, +\infty\}=\epsilon^{-n}$). In particular, $\int_{U(2n,\bR)}\chi_\eps(Sz)\,dS\,\not= 0,$ for every $z\in\rdd$. Formula \eqref{PU} then follows, because 
\begin{align*}
\int_{U(2n,\bR)}\tilde{\chi}_\eps(Sz) \,dS&=\int_{U(2n,\bR)}\frac{{\chi}_\eps(Sz)}{\int_{U(2n,\bR)} \chi_\eps(U S z) \,dU} dS\\
&= \int_{U(2n,\bR)}\frac{{\chi}_\eps(Sz)}{\int_{U(2n,\bR)} \chi_\eps(U  z) \,dU} dS=1
\end{align*}
for every $z\in\rdd$, since the Haar measure is right invariant. \par
Let us now prove \eqref{P1}. For $z=0$ we have 
$$\int_{U(2n,\bR)}{\chi}_\eps(Sz) \,dS=\frac{1}{\eps^n}\int_{U(2n,\bR)} dS=\frac{C_0}{\eps^n},
$$
with $C_0=meas(U(2n,\bR))>0$. Consider now $z\not=0$. Observe that the function 
$$\Psi_\eps(z):=\int_{U(2n,\bR)}{\chi}_\eps(Sz) \,dS
$$
is constant on the orbits of $U(2n,\bR)$ in $\rdd$, so that we can suppose
$$z=(x,0),\quad x=(x_1,0,\dots,0), \quad x_1=|x|=|z|>0.
$$
Now, by the definition of $\chi_\eps$ and $\Psi_\eps$,
\begin{equation}\label{P2}
\Psi_\eps(z)=\eps^{-n} \, meas \left\{S=\begin{pmatrix} A&-B\\B&A\end{pmatrix}\in U(2n,\bR): \, |b_{i,1}|<\frac{\eps}{2|z|},\ i=1,\dots,n \right\},
\end{equation}
where $(b_{i,1})_{i=1,\dots, n}$, is the first column of the matrix $B=(b_{i,j})_{i,j=1,\dots n}$.\par Define, for $\mu>0$, 
$$f(\mu)=meas \left\{ S=\begin{pmatrix} A&-B\\B&A\end{pmatrix}\in U(2n,\bR): \, |b_{i,1}|<\mu,\ i=1,\dots,n \right\}. 
$$
Observe that $f(\mu)$ is non-decreasing and constant for $\mu\geq 1$. Moreover, from Lemma \ref{lemma1} we know that by setting $b_{i,1}=0$, $i=1,\dots,n$, in $U(2n,\bR)$, we get a submanifold $\Sigma$ of codimension $n$, and the function $f(\mu)$ is the measure of a tubular neighbourhood of $\Sigma$ in  $U(2n,\bR)$. Hence we have the asymptotic behaviour 
\begin{equation}\label{P3}
\mu^{-n}f(\mu) \to C_0>0,\quad \mbox{as}\quad \mu\to 0^+
\end{equation}
and in particular
\begin{equation}\label{P4}
f(\mu)\gtrsim \min \{1,\mu^n\}.
\end{equation}
We then infer 
\begin{equation}\label{P5}
\Psi_\eps(z)=\eps^{-n} f\left(\frac{\eps}{2 |z|}\right)\to \frac{C_1}{|z|^n},\quad \mbox{as} \ \eps\to 0^+
\end{equation}
locally uniformly in $\rdd\setminus\{0\}$, with $C_1=2^{-n}C_0$, and
\begin{equation}\label{P6}
\Psi_\eps(z)\gtrsim \eps^{-n}\min\left\{1,\left(\frac{\eps}{|z|}\right)^n\right\}=\min\{\epsilon^{-n},|z|^{-n}\},
\end{equation}
which is \eqref{P1}.

Let us finally prove \eqref{CD}. We are interested in the limit $\eps\to 0^+$, so we can assume $\eps\leq 1$. Consider a continuous function $\Phi$ on $\rdd$ with rapid decay at infinity. By definition of $\tilde{\chi}_\eps(z)$ in \eqref{kitilde} we have
$$\tilde{\chi}_\eps\phas=\frac{\eps^{-n}}{\Psi_\eps \phas} \mathbbm{1}_{[-\eps/2, \eps/2]^n}(\xi)
$$ 
so that, by \eqref{P6},
$$ |\tilde{\chi}_\eps\phas\Phi\phas|\lesssim \eps^{-n}(1+|x|^n) \mathbbm{1}_{[-\eps/2, \eps/2]^n}(\xi)|\Phi\phas|\in L^1(\rdd)$$ 
for $0<\eps\leq 1$. Fubini's Theorem then allows one to  look at the first integral in \eqref{CD}  as an iterated integral
$$
I_\eps:=\int_{\rd} \left(\int_{\rd}\tilde{\chi}_\eps\phas\Phi\phas d\xi\right) dx
$$
and we apply the dominated convergence theorem to the integral with respect to the $x$ variable as follows. 
Setting 
$$\Upsilon_\eps(x):=\int_{\rd}\tilde{\chi}_\eps\phas\Phi\phas d\xi=\eps^{-n}\int_{[-\eps/2, \eps/2]^n}\frac{1}{\Psi_\eps\phas}\Phi\phas\,d\xi,
$$
by \eqref{P5} we have, for every fixed $x\not=0$, 
\[
\Upsilon_\eps(x)\to C |x|^n \Phi(x,0);
\]
for some constant $C>0$. On the other hand $\Upsilon_\eps(x)$ is dominated, using \eqref{P6}, by
$$(1+|x|)^n \sup_{\xi\in\rd}|\Phi\phas|\in L^1(\rd).
$$
Hence
$$\lim_{\eps\to 0^+} I_\eps=\intrd \lim_{\eps\to 0^+} \Upsilon_\eps(x) dx=C\int_{\rd} |x|^n \Phi(x,0)\, dx.
$$
This concludes the proof.
\end{proof}

\begin{remark}
	Observe that there are no conditions on the derivatives of the function $\Phi$ in \eqref{CD}.
\end{remark}

\section{Proofs of the main results}\label{sec3} In what follows we prove Theorems \ref{main} and \ref{main2}.

\begin{proof}[Proof of Theorem \ref{main}.] (i) \textbf{First Step.} Let us start with showing that formula \eqref{normeq} is true for any function $\psi$ in the Schwartz class $\cS(\rd)	\subset M^p(\rd)$, $1\leq p <\infty$. Using the Gaussian $\f(t)=2^{d/4} e^{-\pi |t|^2}$ as window function, we compute the $M^p$-norm of  $\psi$ as in \eqref{defmod} and then use Lemma \ref{lemma2} so that
	\begin{align*}
	\|\psi\|_{M^p}^p&=\intrdd |V_\f \psi(z) |^p\, dz=\intrdd \int_{U(2n,\bR)} \tilde{\chi}_\eps (Sz) |V_\f \psi (z)|^p \, dS dz\\&=
	\intrdd \int_{U(2n,\bR)} \tilde{\chi}_\eps (z) |V_\f \psi (S^{-1}z)|^p \, dS dz\\
	&=	\intrdd \int_{U(2n,\bR)} \tilde{\chi}_\eps (z) |V_{\widehat{S}\f} \widehat{S}\psi (z)|^p \, dS dz\\
	\end{align*}
where in the last equality we used Lemma \ref{Lemma00}. Observe that, since $S$ is unitary and $\f$ is a Gaussian, $\widehat{S}\f=c\f$, for some phase factor $c\in \bC$, with $|c|=1$ (see \cite[Proposition 252]{Gos11}) and this phase factor is killed by the modulus obtaining 
	$|V_{\widehat{S}\f} \widehat{S}\psi (z)|=|V_\f \widehat{S}\psi (z)|$. Continuing the above computation we infer
	$$	\|\psi\|_{M^p}^p= \intrdd \tilde{\chi}_\eps (z) \int_{U(2n,\bR)}  |V_{\f} \widehat{S}\psi (z)|^p \, dS dz.
	$$
	Set \[
	\Phi(z)=\int_{U(2n,\bR)}  |V_{\f} \widehat{S}\psi (z)|^p \, dS.
	\] 
The dominated convergence theorem guarantees that $\Phi$ is continuous on $\rdd$, moreover $\Phi$ has rapid decay at infinity. This follows from Lemma \ref{Lemma0}. 

	Letting $\eps\to 0^+$ and using \eqref{CD}  we obtain
\begin{align*}
\|\psi\|_{M^p}^p&=C\intrd |x|^n \int_{U(2n,\bR)} |V_\f \widehat{S}\psi(x,0)|^p\,dS dx\\ &=C\intrd |x|^n \int_{U(2n,\bR)} |\la \widehat{S}\psi, T_x \f\ra |^p\,dS dx\\
& =C\intrd |x|^n \int_{U(2n,\bR)} |\la \psi, \widehat{S}T_x \f\ra |^p\,dS dx.
\end{align*}
The last equality is due to $\la \widehat{S}\psi ,T_x \f \ra =\la \psi ,\widehat{S}^{-1} T_x \f \ra$ and  the invariance of the Haar measure with respect to the change of variable ${S} \to {S}^{-1}$. \par\medskip
\textbf{Second Step.} Consider $f \in M^p(\rd)$, $1\leq p<\infty$. Using the density of the Schwartz class $\cS(\rd)$ in $M^p(\rd)$ (cf.\ e.g., \cite[Chapter 12]{book}), there exists a sequence $\{\psi_k\}_k\in \cS(\rd)$ such that $\psi_k\to f$ in $M^p(\rd)$. This implies that $\psi_k\to f$ in $\cS'(\rd)$ and 
$$ \la \psi_k, \widehat{S}T_x \f\ra \to \la \psi, \widehat{S}T_x \f\ra
$$
pointwise for every $x\in\rd$, $S\in U(2n,\bR)$. Let us define, for every  $f\in M^p(\rd)$,
\begin{equation}\label{normatil}
|||f|||=\left(\int_{\bR^n\times  U(2n,\bR)} |x|^n |\la f, \widehat{S}T_x \f\ra|^p dx dS\right)^{\frac{1}{p}}.
\end{equation}
By Fatou's Lemma, for any $f\in M^p(\rd)$:
\begin{equation}\label{stima}
|||f|||^p\leq \liminf_{k\to \infty}|||\psi_k|||^p\lesssim \liminf_{k\to \infty}\|\psi_k\|_{{M}^p}^p=\|f\|_{{M}^p}^p.
\end{equation}
It is easy to check that $|||f|||$ is a seminorm on $M^p(\rd)$. Applying \eqref{stima} to the difference $f-\psi_k$ we obtain $|||f-\psi_k|||\to 0$  and hence $|||\psi_k|||\to |||f|||$. By assumption we also have $\|\psi_k\|_{{M}^p} \to \|f\|_{{M}^p}$, and the desired norm equivalence in \eqref{normeq} then extents from $\cS(\rd)$ to $M^p(\rd)$. \par\medskip
\textbf{Third Step.} We will show that \eqref{normeqxi} easily follows from \eqref{normeq}.  Indeed, the Fourier transform $\widehat{J}=\cF$ is a metaplectic operator
and we recall that the Fourier transform is a topological isomorphism $\cF: M^p(\rd) \to 
 M^p(\rd)$, $1\leq p\leq \infty$, \cite{F1}.  Furthermore, by the definition of the symplectic group \eqref{sympM}, for any $S\in U(2n,\bR)$, 
 $$ J^{-1}S=(S^{T})^{-1}J^{-1}=S J^{-1}
 $$
for $S^{-1}=S^{T}$.  For any $f\in M^p(\rd)$, $\|f\|_{M^p}\asymp \|\hat{f}\|_{M^p}$, and using \eqref{Jmeta},
\begin{align*}
|\la \hat{f}, S T_x\f\ra|&=|\la f , \widehat{J^{-1}}\widehat{S}T_x\f\ra|= |\la f , \widehat{S}\cF^{-1}T_x\f\ra|\\
&= |\la f , \widehat{S} M_x \cF^{-1}{\f}\ra|=|\la f , \widehat{S} M_x {\f}\ra|
\end{align*}
since the Gaussian is an eigenvector of $\cF^{-1}$ with eigenvalue equal to $1$. This immediately yields \eqref{normeqxi}.

(ii) Case $p=\infty$. Observe that any $z\in\rdd$ can be written as
$$ z=S^{-1}\begin{pmatrix} x\\0 \end{pmatrix}, 
$$
for some $x\in\rd$, $S\in U(2n,\bR)$, so that, for any $f\in M^\infty(\rd) $,
\begin{align*}
\|f\|_{M^\infty(\rd)}&=\sup_{z\in\rdd}|V_\f f(z)|\asymp \sup_{S\in U(2n,\bR)}\sup_{ x\in \rd}\left| V_\f f\left(S^{-1}\begin{pmatrix} x\\0 \end{pmatrix} \right)\right|\\
&=\sup_{S\in U(2n,\bR)}\sup_{ x\in \rd}| V_\f (S f)(x,0)|=\sup_{S\in U(2n,\bR)}\sup_{ x\in \rd}|\la S f, T_x \f\ra|\\
&= \sup_{S\in U(2n,\bR)}\sup_{ x\in \rd}|\la  f, S T_x \f\ra|,
\end{align*}
as desired.
\end{proof} 

We now prove the similar result, with the group $U(2n,\bR)$ replaced by the subgroup  $\mathbb{T}^n$ (up to isomorphisms).

\begin{proof}[Proof of Theorem \ref{main2} ]
(i) The proof uses a similar pattern to Theorem \ref{main}, replacing the group $U(2n,\bR)$ by $\mathbb{T}^n$. The preparation of Lemma \ref{lemma1} is no longer necessary.  Lemma \ref{lemma2} must be revisited in this context as follows. Using the same notation,  to estimate the function $\Psi_\eps(z)$ we again observe that it is constant on the  orbits of the torus $\mathbb{T}^n$, so that we can suppose 
$$z=(x,0), \quad x=(x_1,\dots,x_n), \quad  x_j=|x_j|=|z_j|>0,\quad j=1,\dots,n.
$$ 
Then 
\begin{align*}
\Psi_\eps (z) &=\eps^{-n} meas \left\{ S=\begin{pmatrix} e^{i\theta_1}& & \\& &\ddots& \\
& & & e^{i\theta_n}\end{pmatrix}\,:\, |\sin \theta_j|< \frac{\eps}{2|z_j|}, \, j=1,\dots,n\right\}\\
&= \eps^{-n}f\left(\frac{\eps}{2|z_1|}\right)\dots f\left(\frac{\eps}{2|z_n|}\right),
\end{align*} 
where now we set, for $\mu>0$,
$$f(\mu)= meas \{\theta\in [0,2\pi]\,:\, |\sin \theta|<\mu\}.
$$
We have 
$$\mu^{-1} f(\mu) \to C>0\quad \mbox{as}\quad \eps\to 0^+
$$
and 
$$ f(\mu)\gtrsim \min \{1,\mu\}
$$
which gives 
$$\Psi_\eps (z)\to \frac{C}{|z_1\cdots z_n|}, \quad \mbox{as} \quad \eps\to 0^+$$
locally uniformly for $z_1,\dots, z_n\in \bR^2\setminus\{0\}$, and 
$$\Psi_\eps (z)\gtrsim \min\{\eps^{-1}, |z_1|^{-1}\}\cdots \min\{\eps^{-1}, |z_n|^{-1}\}.
$$
Using these estimates in place of \eqref{P5} and \eqref{P6}, one can proceed as in the proofs of Lemma \ref{lemma2} and Theorem \ref{main} and obtain the desired conclusion for $f\in\cS(\rd)$. Next, using the same argument as in the Step $2$ of the previous proof, one infers \eqref{ctorus}. 

The characterization \eqref{ctorusfreq} has the same proof as the corresponding formula \eqref{normeqxi}.

(ii) The $M^\infty$ case uses the same argument as in the proofs of \eqref{normeqinf} and \eqref{normeqinffreq}, with the group $U(2n,\bR)$ replaced by $\mathbb{T}^n$.
\end{proof}
\section{Integral representations for the torus in terms of the factional Fourier transform}

Observe that the symplectic matrix in $U(2n,\bR)$ corresponding to the complex matrix $S\in\mathbb{T}^n$ in \eqref{torus} via the isomorphism $\iota$ in \eqref{iota} is given by
$$\iota(S)=\begin{pmatrix} A&-B \\B&A \end{pmatrix}
$$
with
$$A=\begin{pmatrix} \cos \theta_1& & \\& &\ddots& \\
& & & \cos \theta_n\end{pmatrix},\quad B= \begin{pmatrix} \sin \theta_1& & \\& &\ddots& \\
& & & \sin \theta_n\end{pmatrix}.
$$ 
Consider the case $\theta_i\not= k\pi$,  $k\in\bZ$, $i=1,\dots,n$. The matrix $\iota(S)$ is a free symplectic matrix and the related metaplectic operator possesses the integral representation \eqref{opfree}. Since 
$$AB^{-1}=B^{-1}A=\begin{pmatrix} \frac{\cos \theta_1}{\sin\theta_1}& & \\& &\ddots& \\
& & & \frac{\cos \theta_n}{\sin\theta_n}\end{pmatrix},
$$ 
the polynomial $W(x,x')$ becomes
\begin{equation}\label{WB}
W(x_1,\dots,x_n,x_1',\dots,x_n')=\sum_{i=1}^n \frac{1}{2\sin \theta_i}(\cos\theta_i x_i^2-2x_i x_i'+\cos\theta_i x_i'^2)
\end{equation}
and
$$\Delta(W)=\frac{c}{\sqrt{|\sin\theta_1\cdots\sin\theta_n|}}.$$ 
for some phase factor $c\in\bC$, with $|c|=1$. Hence we obtain, for $\psi\in\cS(\rd)$, 
\begin{equation}\label{iS}
\widehat{\iota(S)}\psi(x)=\frac{c}{\sqrt{|\sin\theta_1\cdots\sin\theta_n|}}\intrd e^{2\pi i W(x,x')}\psi (x') dx',
\end{equation}
with $W(x,x')$ in \eqref{WB}. From \eqref{iS} we deduce that $\widehat{\iota(S)}$ can be written as the composition of the operators \begin{equation}\label{prodS}
\widehat{\iota(S)}=\pm\widehat{\iota(S_1)}\cdots \widehat{\iota(S_n)},
\end{equation}
where, for some phase factor $c$, 
\[\label{iSi}\widehat{\iota(S_i)}\psi(x)=\frac{c}{\sqrt{|\sin\theta_i|}}\int_{\bR} e^{\frac{\pi i}{\sin \theta_i}(\cos\theta_i x_i^2-2x_i x_i'+\cos\theta_i x_i'^2)}\psi(x_1',\dots,x_i',\dots,x_n') dx_i'.
\]
Indeed if $\theta_i=\pi/2$, then $\widehat{\iota(S_i)}=\pm \widehat{J}$ is the Fourier transform with respect to the variable $x_i$. Otherwise, $\widehat{\iota(S_i)}=\pm\cF_{\theta_i}$, the $\theta_i$-angle partial fractional Fourier transform (again referred to the variable $x_i$).\par 

Alternatively, the same conclusion \eqref{prodS} can be drawn by writing 
\begin{equation}\label{productS}
S=\begin{pmatrix} e^{i\theta_1}& & \\& &\ddots& \\
& & & e^{i\theta_n}\end{pmatrix}=\begin{pmatrix} e^{i\theta_1}& & &\\& 1& & \\& &\ddots& \\
& & & 1\end{pmatrix}\cdots\begin{pmatrix} 1& & &\\&\ddots & &\\ & & 1 &  \\
& & & e^{i\theta_n}\end{pmatrix}
\end{equation}
that is
$$S=S_1\cdots S_i\cdots S_n,$$
with 
$$ S_i = \begin{pmatrix} 1& & & & & &\\&\ddots & & & & &\\& &1 & & & &\\
& & & e^{i\theta_i} & & &\\& & & &1& &\\ & & & & & \ddots & \\& & & & & &1\end{pmatrix},\quad i=1,\dots,n
$$
so that $$\widehat{\iota (S)}=\widehat{\iota ({S_1})\dots\iota ({S_1})}=\pm \widehat{\iota(S_1)}\cdots \widehat{\iota(S_n)}.$$
If $\theta_i= 2k\pi$ for some $k\in\bZ$,  $\widehat{\iota(S_i)}=\pm {I}$ with $I$ the identity operator. If $\theta_i= (2k+1)\pi$ for some $k\in\bZ$,  $\widehat{\iota(S_i)}\psi(x)=\pm \psi(x_1,\ldots,-x_i,\ldots,x_n) $.


Hence using the $\theta_i$-angle partial fractional Fourier transform $\cF_{\theta_i}=\pm\widehat{\iota(S_i)}$ we can rephrase Theorem \ref{main2} as follows.

\begin{theorem}\label{main2integral} 	Let $\f$ be the Gaussian of Theorem \ref{main}.\par
(i)	For $1\leq p<\infty$, $f\in M^p(\rd)$, we have
	\begin{equation}\label{ctorus}
	\|f\|_{M^p(\rd)}\asymp \left(\int_{\rd\times \mathbb{T}^n } |x_1\dots x_n| |\la f, \cF_{\theta_1}\dots \cF_{\theta_n} T_x \f \ra |^p dx d\theta_1\dots d\theta_n\right)^{\frac 1 p},
	\end{equation}
	or, similarly,
	\begin{equation}\label{ctorusfreq}
	\|f\|_{M^p(\rd)}\asymp \left(\int_{\rd\times \mathbb{T}^n } |\xi_1\dots \xi_n| |\la f, \cF_{\theta_1}\dots \cF_{\theta_n}M_\xi \f \ra |^p d\xi d\theta_1\dots d\theta_n\right)^{\frac 1 p}.
	\end{equation}
	(ii) For $p=\infty$, 
	\begin{equation}\label{ctorusinf}
	\|f\|_{M^\infty(\rd)}\asymp \sup_{{S}\in \mathbb{T}^n } \sup_{x\in\rd}|\la f, \cF_{\theta_1}\dots \cF_{\theta_n}T_x \f \ra |
	\end{equation}
	or 
	\begin{equation}\label{ctorusinffreq}
	\|f\|_{M^\infty(\rd)}\asymp \sup_{{S}\in \mathbb{T}^n } \sup_{\xi\in\rd}|\la f, \cF_{\theta_1}\dots \cF_{\theta_n}M_\xi \f \ra |.
	\end{equation}
\end{theorem}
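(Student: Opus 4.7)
The plan is to derive Theorem \ref{main2integral} as an immediate corollary of Theorem \ref{main2}, by substituting the explicit factorization of $\widehat{\iota(S)}$ worked out in the paragraphs preceding the statement. Under the isomorphism $\iota:U(n)\to U(2n,\bR)$ of \eqref{iota}, the Haar measure on $\mathbb{T}^n\subset U(n)$ is $d\theta_1\cdots d\theta_n$ up to normalization, so Theorem \ref{main2} reads, for $1\leq p<\infty$,
$$\|f\|_{M^p(\rd)}\asymp \left(\int_{\rd\times\mathbb{T}^n} |x_1\cdots x_n|\,|\langle f,\widehat{\iota(S)}T_x\varphi\rangle|^p\,dx\,d\theta_1\cdots d\theta_n\right)^{1/p},$$
and analogously for the other three characterizations. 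The first step is therefore simply to restate Theorem \ref{main2} under this identification.

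The second step is to use formula \eqref{prodS}, combined with the identification of each factor $\widehat{\iota(S_i)}$ with $\pm \mathcal{F}_{\theta_i}$ provided by the integral representation \eqref{iS}, to write
$$\widehat{\iota(S)} = \pm\,\mathcal{F}_{\theta_1}\cdots\mathcal{F}_{\theta_n}.$$
Since the four characterizations of Theorem \ref{main2integral} involve absolute values, the overall sign $\pm 1$ disappears and one obtains the pointwise identity
$$|\langle f,\widehat{\iota(S)}T_x\varphi\rangle| = |\langle f,\mathcal{F}_{\theta_1}\cdots\mathcal{F}_{\theta_n}T_x\varphi\rangle|,$$
and similarly with $M_\xi\varphi$ in place of $T_x\varphi$. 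Inserting this identity into the four formulas of Theorem \ref{main2} immediately yields the four formulas of Theorem \ref{main2integral}.

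The only mildly delicate point, rather than a genuine obstacle, is the handling of the singular angles $\theta_i\in \pi\bZ$: there $\iota(S_i)$ fails to be a free symplectic matrix, the integral representation \eqref{iS} does not apply, and $\mathcal{F}_{\theta_i}$ must be interpreted as $\pm I$ (when $\theta_i\in 2\pi\bZ$) or as the partial parity $\psi\mapsto\psi(x_1,\ldots,-x_i,\ldots,x_n)$ (when $\theta_i\in (2\bZ+1)\pi$), as already observed in the text below \eqref{productS}. These are precisely the values of $\widehat{\iota(S_i)}$ at the exceptional angles, so the pointwise identity above extends to all of $\mathbb{T}^n$. Moreover, the exceptional set has measure zero in $\mathbb{T}^n$, so it is harmless in the $L^p$ integrals; for the $p=\infty$ case, the continuous extension just mentioned guarantees that the suprema of the two quantities over $\mathbb{T}^n$ coincide. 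No further analytic work is required.
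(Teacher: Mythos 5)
Your proposal is correct and follows essentially the same route as the paper, which presents Theorem \ref{main2integral} as a direct rephrasing of Theorem \ref{main2} obtained by substituting the factorization \eqref{prodS} together with the identification $\cF_{\theta_i}=\pm\widehat{\iota(S_i)}$ (including the exceptional angles $\theta_i\in\pi\bZ$ treated exactly as you describe). No further comment is needed.
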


This concludes our study. \par For sake of completeness, let us recall that integral representations involving metaplectic operators that do not arise from free symplectic matrices were studied in \cite{CNRintrep,MO1999}.

\section*{Acknowledgment}
The first and the third author have been supported by the Gruppo
Nazionale per l'Analisi Matematica, la Probabilit\`a e le loro
Applicazioni (GNAMPA) of the Istituto Nazionale di Alta Matematica
(INdAM). MdG has been financed by the Austrian Research Foundation FWF grant P27773.


\begin{thebibliography}{10}
\bibitem{benyi} A. B\'enyi, K. Gr\"ochenig,
K.A. Okoudjou and L.G. Rogers.
Unimodular Fourier multipliers for
modulation spaces. {\it J. Funct. Anal.},
246(2): 366-384, 2007.
\bibitem{benyi2} A. B\'enyi and K.A.
Okoudjou. Time-frequency
estimates for
pseudodifferential operators.
{\it Contemporary Math.},
Amer. Math. Soc., 428:13--22,
2007.
\bibitem{benyi3} A. B\'enyi and K.A.
Okoudjou. Local well-posedness of nonlinear dispersive equations on
modulation spaces. {\it Bull. Lond. Math. Soc.}, 41(3):549--558, 2009.
\bibitem{benyi4} A. B\'enyi,  O. Tadahiro and O. Pocovnicu. On the probabilistic {C}auchy theory of the cubic nonlinear
{S}chr\"odinger equation on {$\Bbb{R}^d$}, {$d\geq3$}. {\it Trans. Amer. Math. Soc. Ser. B}, 2:1--50, 2015.
\bibitem{CNT17} A. Cauli, F. Nicola and A. Tabacco. Strichartz estimates for the metaplectic representation. \emph{Submitted}, 2017. arXiv:1706.03615.
\bibitem{CFP2015} O. Christensen, H.G. Feichtinger and S. Paukner.
{\em Gabor analysis for imaging}. {\em Handbook of mathematical methods in imaging. {V}ol. 1, 2, 3}, 1717--1757, Springer, New York, 2015.
\bibitem{wiener9} F. Concetti and J. Toft. Trace ideals for Fourier integral operators with non-smooth symbols,
{\em ``Pseudo-Differential Operators: Partial Differential Equations and Time-Frequency Analysis''},  Fields Inst. Commun., {\em Amer. Math. Soc.}, 52:255--264, 2007.

\bibitem{wiener8} F. Concetti, G. Garello and J. Toft. Trace ideals for Fourier integral operators with non-smooth symbols II. {\em Osaka J. Math.}, 47(3):739--786, 2010.




\bibitem{Wiener} E.~Cordero, K.~Gr\"ochenig, F. Nicola and L. Rodino.
\newblock Wiener algebras of Fourier integral operators.
\newblock {\em J. Math. Pures Appl.}, 99:219--233, 2013.
\bibitem{fio5}
E.~Cordero and F. Nicola.
\newblock Boundedness of Schr\"odinger type propagators on modulation spaces. {\it J. Fourier Anal. Appl.} 16(3):311--339, 2010.

\bibitem{EFsurvey}
E.~Cordero and F. Nicola.
\newblock Schr\"odinger  Equations with  Bounded Perturbations.
\newblock {\em J. Pseudo-Differ. Op. and Appl.}, 5(3):319--341, 2014.


\bibitem{MetapWiener13} E.~Cordero, K.~Gr\"ochenig, F. Nicola and  L. Rodino.
\newblock  Generalized Metaplectic Operators and the Schr\"odinger
  Equation  with  a  Potential in the  Sj\"ostrand Class.
{\em J. Math. Phys.}, 55(8), art. no. 081506, 2014.

\bibitem{fio3}
E.~Cordero, F. Nicola and L. Rodino.
\newblock Sparsity of  Gabor representation of Schr\"odinger propagators.
\newblock {\em Appl. Comput. Harmon. Anal.}, 26(3):357--370, 2009.

\bibitem{CNRintrep} E.~Cordero,  F. Nicola and  L. Rodino. Integral Representations for the Class of Generalized Metaplectic Operators.  {\em J. Fourier Anal. Appl.}, 21:694--714, 2015. 


\bibitem{Gos11}
M.~A. de~Gosson.
\newblock {\em Symplectic methods in harmonic analysis and in mathematical
  physics}, volume~7 of {\em Pseudo-Differential Operators. Theory and
  Applications}.
\newblock Birkh\"auser/Springer Basel AG, Basel, 2011.

\bibitem{F1}  H.~G.~Feichtinger.
\newblock Modulation spaces on locally
compact abelian groups,
\newblock {\em Technical Report, University Vienna, 1983,} and also in
\newblock {\em Wavelets and Their Applications},
M. Krishna, R. Radha,  S. Thangavelu, editors,
\newblock Allied Publishers,   99--140, 2003.
 \bibitem{F2}  H.~G.~Feichtinger.
 	{ \em Choosing function spaces in harmonic analysis},
 	{\em Excursions in harmonic analysis. {V}ol. 4},
 	{Appl. Numer. Harmon. Anal.},
 	65--101,
 	Birkh\"auser/Springer, Cham, 2015.
 	
  \bibitem{F3}  H.~G.~Feichtinger.
 {Modulation spaces: looking back and ahead}.
 {\em Sampl. Theory Signal Image Process.}, 5(2):109--140, 2006
 
\bibitem{folland89}
G.~B. Folland.
\newblock {\em Harmonic analysis in phase space}.
\newblock Princeton Univ. Press, Princeton, NJ, 1989.

\bibitem{book}
K.~Gr{\"o}chenig. {\it Foundations of time-frequency
analysis}. Applied and Numerical Harmonic Analysis.
Birkh\"auser Boston, Inc., Boston, MA, 2001.
\bibitem{leray} Leray, Jean {\it Lagrangian analysis and quantum mechanics. A mathematical structure related to asymptotic expansions and the Maslov index}, translated from the French by Carolyn Schroeder, MIT Press, Cambridge, Mass.-London, (1981).

\bibitem{MO1999}
H.~Morsche and P.J. Oonincx.
\newblock Integral representations of affine transformations in phase space with an application to energy localization problems. CWI report, Amsterdam, 1999.
\bibitem{sugitomita2}
M. Sugimoto and N. Tomita.
\newblock {{B}oundedness properties of pseudo-differential operators and {C}alder\`on-{Z}ygmund operators on
	modulation spaces}.
\newblock {\em J. Fourier Anal. Appl.}, {14}(1):124--143, 2008.

\bibitem{RWZ2016} M. Ruzhansky, B. Wang and H. Zhang.
 {Global well-posedness and scattering for the fourth order
	nonlinear {S}chr\"odinger equations with small data in
	modulation and {S}obolev spaces},
{\em J. Math. Pures Appl. (9)},105(1):31--65, 2016.
\bibitem{RSW2012}  M. Ruzhansky, M.  Sugimoto and B. Wang.
{\em Modulation spaces and nonlinear evolution equations},
 {\em Evolution equations of hyperbolic and {S}chr\"odinger type},
 {Progr. Math.}, 301:267--283, Birkh\"auser/Springer Basel AG, Basel, 2012.
\bibitem{Toft04}
J.~Toft.
\newblock Continuity properties for modulation spaces, with applications to
pseudo-differential calculus. {I}.
\newblock {\em J. Funct. Anal.}, 207(2):399--429, 2004. 
\bibitem{Toftweight}
J.~Toft.
\newblock Continuity properties for modulation spaces, with applications
to pseudo-differential calculus. {II}.
\newblock {\em Ann. Global Anal. Geom.}, 26(1):73--106, 2004.
 \bibitem{Wangbook} B. Wang,  Z.  Huo,  C.  Hao and Z. Guo.
 {\it Harmonic analysis method for nonlinear evolution equations.
	{I}},
{World Scientific Publishing Co. Pte. Ltd., Hackensack, NJ},
{2011}.
\bibitem{baoxiang} B. Wang, Z. Lifeng and G. Boling. Isometric decomposition operators, function spaces $E_{p,q}^\lambda$ and applications to nonlinear evolution equations. {\it J. Funct. Anal.}, 233(1):1--39, 2006.
\bibitem{baoxiang2} B. Wang and H. Hudzik. The global {C}auchy problem for the {NLS} and {NLKG} with small rough data. {\it J. Differential Equations}, 232(1):36--73, 2007.
\end{thebibliography}
\end{document}